\documentclass[12pt]{amsart}

\usepackage[english]{babel}
\usepackage[utf8x]{inputenc}
\usepackage[T1]{fontenc}
\usepackage{amsmath}
\usepackage{amstext}
\usepackage{amsfonts}
\usepackage{amssymb}
\usepackage{amsthm}
\usepackage{amsrefs}
\usepackage{mathtools}
\usepackage{dsfont}
\usepackage{color}
\usepackage{graphicx}
\usepackage[colorinlistoftodos]{todonotes}
\usepackage[colorlinks=true, allcolors=blue]{hyperref}
\usepackage{float}
\allowdisplaybreaks
\newtheorem{theorem}{Theorem}[section]
\newtheorem{lemma}[theorem]{Lemma}

\newtheorem{cor}[theorem]{Corollary}
\newtheorem{thm}[theorem]{Theorem}
\newtheorem{lem}[theorem]{Lemma}

\newtheorem*{cor*}{Corollary}
\newtheorem*{thm*}{Theorem}
\newtheorem*{lem*}{Lemma}

\newtheorem*{prop*}{Proposition}
\theoremstyle{definition}

\newtheorem*{defn*}{Definition}

\theoremstyle{remark}
\newtheorem{remark}[theorem]{Remark}

\newcommand{\pr}{\operatorname{Prob}}

\newcommand{\act}{\curvearrowright}

\newcommand{\cA}{\mathcal{A}}
\newcommand{\cB}{\mathcal{B}}

\newcommand{\bB}{{\mathbb{B}}}
\newcommand{\bC}{{\mathbb{C}}}
\newcommand{\bE}{{\mathbb{E}}}
\newcommand{\bF}{{\mathbb{F}}}

\newcommand{\bN}{{\mathbb{N}}}

\newcommand{\bR}{{\mathbb{R}}}

\newcommand{\om}{\omega}

\newcommand{\supp}{\operatorname{Supp}}

\newcommand{\ep}{\varepsilon}

\title{On simplicity of intermediate $C^*$-algebras}

\author[T. Amrutam]{Tattwamasi Amrutam}
\author[M. Kalantar]{Mehrdad Kalantar}
\address{Department of Mathematics\\ University of Houston\\USA}
\email{tamrutam@math.uh.edu}
\email{kalantar@math.uh.edu}

\date{}

\begin{document}

\begin{abstract}
We prove simplicity of all intermediate $C^*$-algebras $C^*_{r}(\Gamma)\subseteq \cB \subseteq \Gamma\ltimes_r C(X)$ in the case of minimal actions of $C^*$-simple groups $\Gamma$ on compact spaces $X$. For this, we use the notion of stationary states, recently introduced by Hartman and Kalantar in \cite{HartKal}. We show that the Powers' averaging property holds for the reduced crossed product $\Gamma\ltimes_r \cA$ for any action $\Gamma\act \cA$ of a $C^*$-simple group $\Gamma$ on a unital $C^*$-algebra $\cA$, and use it to prove a one-to-one correspondence between stationary states on $\cA$ and those on $\Gamma\ltimes_r \cA$.
\end{abstract}

\thanks{MK was partially supported by the NSF Grant DMS-1700259.}

\maketitle

\section{Introduction and statement of the main results}

Crossed product $C^*$-algebras are among the most important and most studied classes of $C^*$-algebras. They provide deep connections between theories of  $C^*$-algebras and dynamical systems. The problem of simplicity of reduced crossed product $C^*$-algebras, and more generally understanding the ideal structure of the full crossed products
have received lots of attention in the past few decades (see e.g. \cites{Tak64, Elliott80, DS86, KT90, Bedos91, JangLee93, AS94, Sier10}). 

In \cite{DS86}, de la Harpe and Skandalis proved  
that for any action $\Gamma\act \cA$ of a Powers' group $\Gamma$ on a unital $C^*$-algebra $\cA$, the reduced crossed product $\Gamma\ltimes_r \cA$ is simple if $\cA$ is $\Gamma$-simple (i.e. has no non-zero proper two sided closed $\Gamma$-invariant ideals).  
They left it as a question whether the same holds in the more general case of $C^*$-simple groups. In \cite{BKKO}, the authors answered this question by proving the result for all $C^*$-simple groups, by using the dynamics of the Furstenberg boundary action. 

Intermediate $C^*$-algebras, i.e. $C^*$-algebras $\cB$ of the form $C_{r}^*(\Gamma) \subseteq \cB \subseteq \Gamma\ltimes_r \cA$, have recently gained some particular attention, for instance in the work of Suzuki (\cites{Suz17, Suz18}) in connection to problems of minimal ambient nuclear $C^*$-algebras as well as maximal injective von Neumann subalgebras.

In this paper we consider the simplicity problem for  intermediate $C^*$-subalgebras of crossed products of $C^*$-simple group actions, and more generally, the $\Gamma$-simplicity of their unital $\Gamma$-invariant $C^*$-subalgebras. 

\begin{theorem}\label{main-general}
Let $\Gamma$ be a countable discrete $C^*$-simple group, and let $\cA$ be a $\Gamma$-$C^*$-algebra. Suppose for some $C^*$-simple measure $\mu\in\pr(\Gamma)$, all $\mu$-stationary states on $\cA$ are faithful. Then any unital $\Gamma$-invariant $C^*$-subalgebra of the reduced crossed product $\Gamma\ltimes_r \cA$ is $\Gamma$-simple.
\end{theorem}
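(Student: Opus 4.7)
My plan is to argue by contradiction using the two ingredients announced in the abstract: the Powers' averaging property for $\Gamma\ltimes_r\cA$, and the one-to-one correspondence between $\mu$-stationary states on $\cA$ and those on $\Gamma\ltimes_r\cA$. Suppose $\cB\subseteq\Gamma\ltimes_r\cA$ is a unital $\Gamma$-invariant $C^{*}$-subalgebra and $J\subsetneq\cB$ is a proper $\Gamma$-invariant closed two-sided ideal. Fix a positive $b\in J$ with $b\neq 0$; the goal is to force $1\in J$, contradicting properness.

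The first step is to apply Powers' averaging inside $\Gamma\ltimes_r\cA$: for every $\ep>0$, there should exist $g_{1},\dots,g_{n}\in\Gamma$ and a probability vector $(\alpha_{i})$ with
$$\Bigl\|\sum_{i}\alpha_{i}\,u_{g_{i}}\,b\,u_{g_{i}}^{*}-\varphi(b)\cdot 1\Bigr\|<\ep$$
for a suitable $\mu$-stationary state $\varphi$ on $\Gamma\ltimes_r\cA$, where the $u_{g}$ are the canonical unitaries implementing the action. Because $\cB$, and hence $J$, is $\Gamma$-invariant, each conjugate $u_{g_{i}}bu_{g_{i}}^{*}$ lies in $J$; the convex combination also lies in $J$; letting $\ep\to 0$ and invoking norm-closedness of $J$ places $\varphi(b)\cdot 1\in J$.

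The second step is to show $\varphi(b)>0$. By the stationary-state correspondence, $\varphi$ is of the form $\psi\circ E$ for some $\mu$-stationary state $\psi$ on $\cA$, where $E\colon\Gamma\ltimes_r\cA\to\cA$ denotes the canonical faithful conditional expectation. Since $b$ is positive and non-zero, faithfulness of $E$ yields $E(b)\geq 0$ with $E(b)\neq 0$; the hypothesis that every $\mu$-stationary state on $\cA$ is faithful then gives $\varphi(b)=\psi(E(b))>0$. Combined with the previous paragraph, $1\in J$, so $J=\cB$.

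The chief difficulty I anticipate lies in the first step: Powers' averaging must be phrased so that the limiting scalar is specifically the value of a \emph{stationary} state at $b$, rather than a general element of a fixed-point subalgebra or some canonical tracial value. This is precisely where the stationary-state correspondence intervenes, as it is what allows the faithfulness hypothesis on $\cA$ to be transferred into strict positivity of $\varphi$ at $b$ on the crossed product, and hence into the conclusion that the averaged scalar multiple of the unit is nonzero.
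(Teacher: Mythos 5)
There is a genuine gap in your first step. The Powers' averaging property available for $\Gamma\ltimes_r\cA$ (Theorem \ref{Pow-av-red-cros-prod}) says only that $\frac1n\sum_{k=1}^n\mu^k*(b-\bE(b))\to 0$ in norm; that is, convex combinations of conjugates of $b$ can be made close to the corresponding averages of $\bE(b)$, which live in $\sigma(\cA)$ --- \emph{not} close to a scalar $\varphi(b)\cdot 1$. Collapsing to scalars is special to the case $\cA=\bC$, where it reflects unique stationarity of $\tau_0$ on $C^*_{r}(\Gamma)$. In the crossed product there are in general many $\mu$-stationary states (one for each $\mu$-stationary state on $\cA$, by Theorem \ref{thm:charac-stationary-crsed-prod}), so no single averaging scheme can converge in norm to $\varphi(\cdot)1$. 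Concretely, for $b=\sigma(f)$ with $f\in C(X)$ positive and non-constant and $\Gamma\act X$ minimal, every average $\sum_i\alpha_i u_{g_i}bu_{g_i}^{*}$ equals $\sigma\left(\sum_i\alpha_i g_if\right)$ and stays a fixed distance from the scalars unless the averages of $f$ norm-converge to a constant, which the hypotheses (all stationary states faithful) do not provide. Hence $\varphi(b)\cdot 1$ is never shown to lie in $J$ and the contradiction is not reached. Your second step, the strict positivity of $\varphi(b)$ via faithfulness of $\bE$ and of stationary states on $\cA$, is sound and is exactly how the correspondence is meant to be used.

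The paper sidesteps this by running the argument through states rather than elements: given a proper invariant ideal $I\subseteq\cB$, it produces a $\mu$-stationary state on $\cB/I$ (existence from \cite{HartKal}, Proposition 4.2), pulls it back to a $\mu$-stationary state on $\cB$ vanishing on $I$, extends it to a $\mu$-stationary state $\tau$ on all of $\Gamma\ltimes_r\cA$ (same proposition), and only then invokes the characterization $\tau=\nu\circ\sigma^{-1}\circ\bE$ of Theorem \ref{thm:charac-stationary-crsed-prod} --- which is where the averaging is actually consumed --- to conclude that $\tau$ is faithful, contradicting $\tau|_I=0$. If you wish to keep an element-based Powers argument, you would need the much stronger assertion that averages of elements of $\sigma(\cA)$ converge in norm to scalars, which amounts to unique stationarity with norm convergence of averages on $\cA$ and is not among the hypotheses.
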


Since, with respect to the inner action of $\Gamma$, any $C^*$-subalgebra $\cB$ of $\Gamma\ltimes_r \cA$ that contains $C^*_{r}(\Gamma)$, as well as any ideal in $\cA$, are invariant, the following is immediate.

\begin{cor}
Under the assumptions of Theorem \ref{main-general}, any intermediate $C^*$-subalgebra $C_{r}^*(\Gamma) \subseteq \cB \subseteq \Gamma\ltimes_r \cA$ is simple.
\end{cor}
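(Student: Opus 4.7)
The plan is to deduce the corollary directly from Theorem \ref{main-general}, essentially by the observation that the authors record in the paragraph preceding the statement. The argument breaks into two short steps.

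First, I would verify that every intermediate $C^*$-algebra $\cB$ with $C^*_r(\Gamma) \subseteq \cB \subseteq \Gamma\ltimes_r\cA$ is invariant under the inner action of $\Gamma$ on $\Gamma\ltimes_r\cA$, which is given by $\gamma\cdot x = u_\gamma x u_\gamma^*$ for the canonical unitaries $u_\gamma \in C^*_r(\Gamma)$ implementing $\Gamma$ in the crossed product. Since $u_\gamma \in C^*_r(\Gamma)\subseteq \cB$ and $\cB$ is a $*$-subalgebra of $\Gamma\ltimes_r\cA$, conjugation by $u_\gamma$ preserves $\cB$, so $\cB$ is a unital $\Gamma$-invariant $C^*$-subalgebra. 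Theorem \ref{main-general} therefore applies to $\cB$ and yields that $\cB$ is $\Gamma$-simple.

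Second, I would upgrade $\Gamma$-simplicity to simplicity. Let $\cI \subseteq \cB$ be any closed two-sided ideal. For each $\gamma\in\Gamma$, the inner automorphism $\Ad(u_\gamma)$ of $\cB$ sends $\cI$ into itself, because closed two-sided ideals are preserved by every automorphism of the ambient algebra. Hence $\cI$ is $\Gamma$-invariant under the inner action, and $\Gamma$-simplicity forces $\cI \in \{0, \cB\}$. This is exactly the statement that $\cB$ is simple.

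The only thing to keep straight is that the same $\Gamma$-action, namely inner conjugation on $\Gamma\ltimes_r\cA$, is what realizes $\cB$ as a $\Gamma$-invariant subalgebra and what forces ideals of $\cB$ to be $\Gamma$-invariant; both uses are legitimate because $C^*_r(\Gamma)\subseteq \cB$ supplies the implementing unitaries. There is no real obstacle in the deduction: the substantive content of the corollary is carried entirely by Theorem \ref{main-general}, and the corollary itself is a two-line consequence of the inclusion $C^*_r(\Gamma)\subseteq \cB$.
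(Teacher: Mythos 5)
Your argument is correct and is exactly the paper's own one-line justification: with respect to the inner action of $\Gamma$ on $\Gamma\ltimes_r\cA$, any $C^*$-subalgebra containing $C^*_r(\Gamma)$ is invariant and so is every ideal of such a subalgebra, so Theorem \ref{main-general} gives $\Gamma$-simplicity and this coincides with simplicity. One phrasing quibble: closed two-sided ideals are \emph{not} preserved by arbitrary automorphisms of the ambient algebra (an automorphism merely permutes the ideals); what makes $u_\gamma \cI u_\gamma^* \subseteq \cI$ work is precisely that $u_\gamma$ lies in $\cB$ itself, so $u_\gamma \cI u_\gamma^* \subseteq \cB\,\cI\,\cB \subseteq \cI$ --- a point you do state correctly in your final paragraph.
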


In the case of commutative $\Gamma$-$C^*$-algebras $\cA= C(X)$, we obtain the following.

\begin{thm}\label{main-commutative}
Let $\Gamma$ be a countable discrete $C^*$-simple group, and let $\Gamma \curvearrowright X$ be a minimal action of $\Gamma$ on a compact space $X$. Then any unital $\Gamma$-invariant $C^*$-subalgebra of $\Gamma\ltimes_r \cA$ is $\Gamma$-simple. In particular, any intermediate $C^*$-subalgebra $C_{r}^*(\Gamma) \subseteq \cB \subseteq \Gamma\ltimes_r C(X)$ is simple.
\end{thm}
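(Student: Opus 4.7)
My plan is to deduce Theorem~\ref{main-commutative} from Theorem~\ref{main-general} applied to the commutative $\Gamma$-$C^*$-algebra $\cA = C(X)$. It therefore suffices to verify the hypothesis of Theorem~\ref{main-general}: there exists a $C^*$-simple measure $\mu \in \pr(\Gamma)$ such that every $\mu$-stationary state on $C(X)$ is faithful. Under the Riesz identification of states on $C(X)$ with regular Borel probability measures on $X$, faithfulness of a state is equivalent to full support of the corresponding measure, so the task reduces to producing $\mu$ such that every $\mu$-stationary probability measure on $X$ has full support.

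Since $\Gamma$ is $C^*$-simple, the results of \cite{HartKal} supply a $C^*$-simple measure $\mu \in \pr(\Gamma)$ whose support generates $\Gamma$ as a semigroup (for example, any fully supported such $\mu$ will do). Fix such $\mu$ and let $\nu$ be an arbitrary $\mu$-stationary probability measure on $X$. Setting $K = \supp(\nu)$ and reading off supports in the stationarity identity $\nu = \sum_g \mu(g)\, g_*\nu$ gives
\[
K \;=\; \overline{\bigcup_{g \in \supp(\mu)} gK},
\]
so $gK \subseteq K$ for every $g \in \supp(\mu)$. Iterating yields $sK \subseteq K$ for every $s$ in the semigroup generated by $\supp(\mu)$, which by our choice of $\mu$ is all of $\Gamma$. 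Applying this to both $g$ and $g^{-1}$ forces $gK = K$ for every $g \in \Gamma$, so $K$ is a nonempty closed $\Gamma$-invariant subset of $X$. Minimality of $\Gamma \act X$ then forces $K = X$, i.e.\ $\nu$ has full support, as required.

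The ``in particular'' clause is automatic: any intermediate $C^*$-algebra $C^*_r(\Gamma) \subseteq \cB \subseteq \Gamma \ltimes_r C(X)$ is invariant under the inner $\Gamma$-action on the crossed product (implemented by the unitaries in $C^*_r(\Gamma) \subseteq \cB$), and every two-sided closed ideal of $\cB$ is likewise inner-invariant, so $\Gamma$-simplicity of $\cB$ (the conclusion of the first part) coincides with plain simplicity. All of the substantive work has been absorbed into Theorem~\ref{main-general}; the only step specific to the commutative case is the classical full-support argument above for stationary measures on a minimal action. The only potential sticking point is the appeal to \cite{HartKal} for the existence of a $C^*$-simple measure with semigroup-generating support, and this is known for every $C^*$-simple group, so no genuine obstacle remains.
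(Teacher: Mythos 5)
Your proposal is correct and follows essentially the same route as the paper: reduce to Theorem~\ref{main-general} by producing a $C^*$-simple measure $\mu$ with semigroup-generating support and showing, via the support identity for $\mu$-stationary measures and minimality, that every $\mu$-stationary probability on $X$ has full support; the ``in particular'' clause then follows from invariance under the inner action exactly as in the paper's Corollary. The one point you gloss over is the step you yourself flag as a ``potential sticking point'': the existence of a generating (fully supported) $C^*$-simple measure is \emph{not} stated in \cite{HartKal}, and the paper has to establish it separately (Lemma~\ref{lem:gen-C*-simple-measure}) by convolving the averaging sequence from the proof of \cite[Theorem 5.1]{HartKal} with a fixed fully supported probability measure and observing that the norm estimates survive. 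This is only a short argument, but as written your appeal to ``known'' results is an unsupported citation rather than a proof, so you should either supply that convolution trick or cite the lemma explicitly.
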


Examples of actions $\Gamma\act \cA$, where $\cA$ is noncommutative and assumptions of Theorem \ref{main-general} hold, include $\Gamma\act C^*_{r}(\Gamma)$ by inner automorphisms, for any $C^*$-simple group $\Gamma$ (\cite[Theorem 5.1]{HartKal}), as well as $\bF_n\act\bF_n\ltimes_r C(\partial\bF_n)$, also by inner automorphisms, for any $n\geq 2$ (\cite[Example 4.13]{HartKal}).

None of the proofs in \cite{DS86} and \cite{BKKO} of simplicity of the reduced crossed products have obvious modification to include the case of invariant subalgebras. In fact, one can observe that such a result is very far from being true in general. For example, let $\cA$ be a non-trivial simple $C^*$-algebra and let $\Gamma\act \cA$ be the trivial action of a Powers' group $\Gamma$. Then $\Gamma\ltimes_r\cA = C^*_{r}(\Gamma)\otimes \cA$ is simple. However, if $\cB$ is a non-simple unital $C^*$-subalgebra of $\cA$, then $C^*_{r}(\Gamma)\subset\Gamma\ltimes_r\cB \subset \Gamma\ltimes_r\cA$, and $\Gamma\ltimes_r\cB = C^*_{r}(\Gamma)\otimes \cB$ is not simple.
It is not hard to construct even a faithful such action. But one should notice that the main reason that simplicity for invariant subalgebras could fail is that, in general, $\Gamma$-simplicity does not pass to subalgebras. 

On the other hand, in the above setup, even in the more general case of a $C^*$-simple group $\Gamma$, if $\cA = C(X)$ is commutative and $\Gamma$-simple (which is equivalent to minimality of $\Gamma\act X$), since any invariant $C^*$-subalgebra $\cB\subset \cA$ is of the form $C(Y)$ where $Y$ is an equivariant factor of $X$, and since minimality passes to factors, it follows $\Gamma\ltimes_r\cB$ is also simple by \cite[Theorem 7.1]{BKKO}.

Thus, we have observed that if $\Gamma$ is $C^*$-simple and $\Gamma\act X$ is minimal, then any intermediate $C^*$-subalgebra $C^*_{r}(\Gamma)\subset\cB\subset \Gamma\ltimes_r C(X)$ which itself is of the form $\cB= \Gamma\ltimes_r C(Y)$, is simple.

To deal with general intermediate $C^*$-subalgebras, not necessarily of the crossed product type, we need to translate minimality in the non-commutative setting in a way that passes to subalgebras and does not require a crossed product structure to realize.
Inspired by the recent work \cite{HartKal} of Hartman and the second named author, we use the notion of stationary states to capture ``minimality'' of the intermediate $C^*$-subalgebras.\\

Before proceeding into the details of our results, we recall some definitions and basic facts, and fix some conventions and terminology. Unless otherwise stated, $\Gamma$ will be a countable discrete group, and all compact spaces are assumed Hausdorff. 
We denote by $\lambda: \Gamma \to \cB(\ell^2(\Gamma))$ the left regular representation of $\Gamma$, and by $C^*_{r}(\Gamma)$ the reduced $C^*$-algebra of $\Gamma$, i.e. the $C^*$-algebra generated by $\{\lambda_s: s\in \Gamma\}$. We denote by $\tau_0$ the canonical trace on $C^*_{r}(\Gamma)$, defined by $\tau_0(\lambda_e) = 1$ and $\tau_0(\lambda_s) = 0$ for all non-trivial elements $s\in \Gamma/\{e\}$.

By \emph{$\Gamma$-$C^*$-algebra}, we mean a unital $C^*$-algebra on which $\Gamma$ acts by $*$-automorphisms. We say $\cA$ is $\Gamma$-simple if it does not contain any non-trivial proper closed $\Gamma$-invariant ideals. If $\cA = C(X)$ is commutative, then $\cA$ is $\Gamma$-simple iff $\Gamma\act X$ is minimal, i.e. the compact $\Gamma$-space $X$ does not have any non-empty proper closed $\Gamma$-invariant subsets.

Any action $\Gamma\act \cA$ induces an action of $\Gamma$ on the state space of $\cA$ in a canonical way, $(s\tau)(a) = \tau(s^{-1}(a))$ for $s\in\Gamma$, $a\in \cA$, and a state $\tau$ on $\cA$. Let $\mu\in\pr(\Gamma)$, a state $\tau$ on a $\Gamma$-$C^*$-algebra $\cA$ is said to be \emph{$\mu$-stationary} if $\mu*\tau = \tau$, where $\mu*\tau = \sum_{s\in\Gamma} \mu(s)s\tau$ is the convolution of $\tau$ by $\mu$. The theory of stationary states was introduced and studied in \cite{HartKal}, where applications to several rigidity problems in ergodic theory and operator algebras were given. One of the main results there states that a countable group $\Gamma$ is $C^*$-simple if and only if there is a measure $\mu\in\pr(\Gamma)$ such that the canonical trace $\tau_0$ is the unique $\mu$-stationary state on $C^*_{r}(\Gamma)$ (\cite[Theorem 5.1]{HartKal}); such a measure $\mu$ is called a \emph{$C^*$-simple measure} (\cite[Definition 5.2]{HartKal}).

\section{Powers' averaging property for crossed products}
In this section we prove the Powers' averaging property for the reduced crossed product $\Gamma\ltimes_r \cA$, in the case of an action $\Gamma\act \cA$ of a $C^*$-simple group $\Gamma$ on a unital $C^*$-algebra $\cA$. In the case of Powers' groups $\Gamma$, this result was proved by de la Harpe and Skandalis in \cite{DS86}, and it was used to prove simplicity of the reduced crossed product $\Gamma\ltimes_r \cA$ when $\cA$ is $\Gamma$-simple.
Recent developments of the subject include two independent work of Haagerup \cite{Haagerup} and Kennedy \cite{Kennedy}, where they prove that the reduced $C^*$-algebra $C^*_{r}(\Gamma)$ of any $C^*$-simple group $\Gamma$ has the Powers' averaging property. Below we prove the same averaging scheme can be lifted to the crossed product level as well.

First, let us quickly recall the construction of reduced crossed products in order to introduce our notation. Let $\cA$ be a unital $\Gamma$-$C^*$-algebra. Fix a faithful $*$-representation $\pi: \cA \to \mathbb{B}(H)$ of $\cA$ into the space of bounded operators on the Hilbert space $H$. Denote by $\ell^2(\Gamma, H)$ the space of square summable $H$-valued functions on $\Gamma$. The group $\Gamma$ acts on $\ell^2(\Gamma, H)$ by left translation unitaries 
\[
\tilde\lambda_s\xi(t):=\xi(s^{-1}t)\quad \left(s,t \in \Gamma, \,\xi \in\ell^2(\Gamma,H)\right).
\]
There is also a $*$-representation $\sigma:\cA \to \cB(\ell^2(\Gamma, H))$ defined by
\[
[\sigma(a)\xi](t):=\pi(t^{-1}a)(\xi(t))\quad \left(a \in A,\, \xi \in\ell^2(\Gamma,H),\, t \in \Gamma \right) .
\]
The reduced crossed product $\Gamma\ltimes_r\cA$ is the $C^*$-algebra generated by unitaries $\{\tilde\lambda_s : s\in\Gamma\}$ and operators $\{\sigma(a) : a\in\cA\}$ in $\cB(\ell^2(\Gamma, H))$. Note that $\tilde\lambda_s\sigma(a)\tilde\lambda_{s^{-1}}=\sigma(sa)$ for all $s \in \Gamma$ and $a \in A$. In particular, the group $\Gamma$ also acts on $\Gamma\ltimes_r\cA$ by inner automorphisms.

We denote by $\bE: \Gamma\ltimes_r\cA \to \sigma(\cA)$ the canonical conditional expectation which is defined by $\bE(\sigma(a)) = \sigma(a)$ and $\bE(\sigma(a)\tilde\lambda_s) = 0$ for $a\in \cA$ and $s \in \Gamma/\{e\}$. The map $\bE$ is $\Gamma$-equivariant and faithful.

The following lemma provides the estimation that will allow us to lift an averaging scheme from the reduced $C^*$-algebra to the reduced crossed product.

\begin{lemma}\label{ineq-av-1}
Let $\Gamma$ be a discrete group, and let $\cA$ be a $\Gamma$-$C^*$-algebra. Then for any $t_0, s_1, \dots, s_m\in \Gamma$, $p_1, \dots, p_m \in \bR^+$, and $a\in \cA$ we have
\begin{equation}\label{ave-ineq-1}
\left\|\sum_{j=1}^m p_j\tilde\lambda_{s_j}\sigma(a)\tilde\lambda_{t_0}\tilde\lambda_{s_j^{-1}}\right\|_{\bB(\ell^2(\Gamma, H))} \le \|a\|_\cA \left\|\sum_{j=1}^m p_j\lambda_{s_jt_0s_j^{-1}}\right\|_{\bB(\ell^2(\Gamma))} .
\end{equation}
\end{lemma}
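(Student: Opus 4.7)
The plan is to reduce the left-hand side to the standard form $\sum_j p_j \sigma(b_j)\tilde\lambda_{g_j}$ and then majorize its action on $\ell^2(\Gamma, H)$ by a scalar convolution on $\ell^2(\Gamma)$ via a pointwise envelope argument. Using the covariance relation $\tilde\lambda_s \sigma(a) \tilde\lambda_{s^{-1}} = \sigma(sa)$, I would first rewrite
\[
\tilde\lambda_{s_j}\sigma(a)\tilde\lambda_{t_0}\tilde\lambda_{s_j^{-1}} \;=\; \bigl(\tilde\lambda_{s_j}\sigma(a)\tilde\lambda_{s_j^{-1}}\bigr)\,\tilde\lambda_{s_j t_0 s_j^{-1}} \;=\; \sigma(s_j a)\,\tilde\lambda_{g_j},
\]
with $g_j := s_j t_0 s_j^{-1}$. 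Setting $T := \sum_{j=1}^m p_j \sigma(s_j a)\tilde\lambda_{g_j}$, the goal becomes $\|T\|_{\bB(\ell^2(\Gamma, H))}\le \|a\|_\cA \bigl\|\sum_j p_j \lambda_{g_j}\bigr\|_{\bB(\ell^2(\Gamma))}$.

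Next, for an arbitrary $\xi \in \ell^2(\Gamma, H)$, I would unpack the definitions: since $[\sigma(b)\tilde\lambda_g \xi](t) = \pi(t^{-1}b)\,\xi(g^{-1}t)$, one has
\[
[T\xi](t) \;=\; \sum_{j=1}^m p_j \,\pi(t^{-1}s_j a)\,\xi(g_j^{-1} t).
\]
Applying the triangle inequality in $H$ together with $\|\pi(t^{-1}s_j a)\|\le \|a\|_\cA$, and introducing the scalar envelope $\eta(t) := \|\xi(t)\|_H$, which lies in $\ell^2(\Gamma)^+$ with $\|\eta\|_{\ell^2(\Gamma)} = \|\xi\|_{\ell^2(\Gamma, H)}$, yields the pointwise bound
\[
\|[T\xi](t)\|_H \;\le\; \|a\|_\cA \sum_j p_j \,\eta(g_j^{-1}t) \;=\; \|a\|_\cA \,\Bigl[\Bigl(\sum_j p_j \lambda_{g_j}\Bigr)\eta\Bigr](t),
\]
where positivity of the $p_j$ and of $\eta$ is what lets me drop absolute values and recognize the inner sum as the value of $(\sum_j p_j\lambda_{g_j})\eta$ at $t$.

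Finally, squaring and summing over $t\in\Gamma$ gives
\[
\|T\xi\|_{\ell^2(\Gamma, H)}^2 \;\le\; \|a\|_\cA^2 \,\Bigl\|\sum_j p_j \lambda_{g_j}\,\eta\Bigr\|_{\ell^2(\Gamma)}^2 \;\le\; \|a\|_\cA^2\,\Bigl\|\sum_j p_j \lambda_{g_j}\Bigr\|^2 \|\eta\|^2 \;=\; \|a\|_\cA^2\,\Bigl\|\sum_j p_j \lambda_{g_j}\Bigr\|^2 \|\xi\|^2.
\]
Taking square roots and supremum over $\xi$ yields \eqref{ave-ineq-1}. There is no real obstacle here: the whole content is the observation that after conjugating into the covariant form $\sigma(s_j a)\tilde\lambda_{g_j}$, the $C^*$-algebra factor acts as a fibrewise multiplication operator of norm at most $\|a\|_\cA$, so passing to the scalar envelope $\eta = \|\xi(\cdot)\|_H$ decouples the $\cA$-norm from the $\ell^2(\Gamma)$-norm and reduces the estimate to an inequality purely about the left regular representation.
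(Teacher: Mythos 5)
Your argument is correct and is essentially the paper's own proof: both compute the operator pointwise via the covariance relation, pass to the scalar envelope $\eta(t)=\|\xi(t)\|_H$ using $\|\pi(\cdot)\|\le\|a\|_\cA$ and positivity of the $p_j$, and finish with the operator norm of $\sum_j p_j\lambda_{g_j}$ on $\ell^2(\Gamma)$. (Your intermediate expression $\pi(t^{-1}s_j a)\xi(g_j^{-1}t)$ is in fact the correct one; the paper's displayed $\pi(t^{-1}s_jt_0a)$ is a harmless slip, irrelevant to the estimate.)
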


\begin{proof}
For $\xi \in \ell^2(\Gamma, H)$, observe that for each $t\in \Gamma$ we have
\[\begin{split}
&\sum_{j=1}^m p_j[\tilde\lambda_{s_j}\sigma(a)\tilde\lambda_{t_0}\tilde\lambda_{s_j^{-1}}(\xi)](t)
=\sum_{j=1}^m p_j[\sigma(s_ja)\tilde\lambda_{s_jt_0s_j^{-1}}\xi](t)
\\&=\sum_{j=1}^m p_j[\sigma(s_ja)\xi](s_jt_0^{-1}s_j^{-1}t)
=\sum_{j=1}^m p_j\pi(t^{-1}s_jt_0a)[\xi(s_jt_0^{-1}s_j^{-1}t)] .
\end{split}\]
Define the function $\xi_1(t)=\|\xi(t)\|_H$, $t\in \Gamma$. Then $\xi_1\in\ell^2(\Gamma)$, and $\left\|\xi_1\right\|_{\ell^2(\Gamma)}=\left\|\xi\right\|_{\ell^2(\Gamma, H)}$. We have
\[\begin{split}
\left\|\sum_{j=1}^m p_j\tilde\lambda_{s_j}\sigma(a)\tilde\lambda_{t_0}\tilde\lambda_{s_j^{-1}}(\xi)\right\|_{\ell^2(\Gamma, H)}^2
&=\sum_{t \in \Gamma}\left\|\sum_{j=1}^m p_j\pi(t^{-1}s_jt_0a)[\xi(s_jt_0^{-1}s_j^{-1}t)]\right\|_H^2
\\&\le \|a\|_\cA^2\sum_{t \in \Gamma}\left(\sum_{j=1}^m p_j\left\|\xi(s_jt_0^{-1}s_j^{-1}t)\right\|_H\right)^2
\\&=\|a\|_\cA^2\left\|\sum_{j=1}^m p_j\lambda_{s_jt_0s_j^{-1}}(\xi_1)\right\|_{\ell^2(\Gamma)}^2
\\&\leq\|a\|_\cA^2\left\|\sum_{j=1}^m p_j\lambda_{s_jt_0s_j^{-1}}\right\|_{\bB(\ell^2(\Gamma))}^2\left\|\xi_1\right\|_{\ell^2(\Gamma)}^2 ,
\end{split}\]
and since $\left\|\xi_1\right\|_{\ell^2(\Gamma)}=\left\|\xi\right\|_{\ell^2(\Gamma, H)}$, the inequality \eqref{ave-ineq-1} follows.
\end{proof}

It follows, in particular, from the above Lemma \ref{ineq-av-1} that if $C^*_{r}(\Gamma)$ has the Powers' averaging property, then so does the reduced crossed product $\Gamma\ltimes_r \cA$ for any action $\Gamma\act \cA$. But in order to prove our characterization of stationary states on the crossed product, we need a more precise averaging scheme.

We denote by $\mu^*k$ the $k$-th convolution power of a measure $\mu\in\pr(\Gamma)$. Also, for $a\in \cA$ and a measure $\mu'\in\pr(\Gamma)$, we denote $\mu'*a = \sum_{t\in\Gamma} \mu'(t) ta$ for the convolution of $a$ by $\mu'$.

\begin{theorem}\label{Pow-av-red-cros-prod}
Let $\Gamma$ be a $C^*$-simple group, let $\mu \in \pr(\Gamma)$ be a $C^*$-simple measure, and let $\cA$ be a $\Gamma$-$C^*$-algebra. Then 
\[
\left\|\, \frac1n\sum_{k=1}^n\mu^k*(a-\bE(a)) \, \right\| \xrightarrow{n \to \infty} 0
\]
for every $a\in \Gamma\ltimes_r \cA$.
\end{theorem}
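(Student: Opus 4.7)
The plan is to reduce the theorem to a norm-averaging statement on $C^*_r(\Gamma)$ alone, using Lemma~\ref{ineq-av-1} to transfer the averaging from the group algebra to the crossed product. Since the $*$-subalgebra of finite sums $\sum_{t\in F}\sigma(a_t)\tilde\lambda_t$ is norm-dense in $\Gamma\ltimes_r\cA$, and both $\bE$ and the map $x\mapsto\mu^k*x$ are norm-contractive, an approximation argument first reduces matters to elements of the form $a=\sigma(b)\tilde\lambda_{t_0}$ with $t_0\in\Gamma\setminus\{e\}$ (so that $\bE(a)=0$), handled term-by-term by linearity: given $a\in\Gamma\ltimes_r\cA$ and $\epsilon>0$, pick a finite sum $y$ with $\|a-y\|<\epsilon$, set $a_0:=y-\bE(y)$, note $\|(a-\bE(a))-a_0\|<2\epsilon$, and absorb the tail uniformly in $n$.

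Next, for $a=\sigma(b)\tilde\lambda_{t_0}$ with $t_0\ne e$, I would expand
\[
\frac{1}{n}\sum_{k=1}^{n}\mu^k*a \;=\; \sum_{s\in\Gamma} q_n(s)\,\tilde\lambda_s\sigma(b)\tilde\lambda_{t_0}\tilde\lambda_{s^{-1}}, \qquad q_n(s):=\frac{1}{n}\sum_{k=1}^{n}\mu^k(s),
\]
and apply Lemma~\ref{ineq-av-1} (extended from finite sums to the $\ell^1$ weights $q_n$ by truncation) to obtain
\[
\Bigl\|\frac{1}{n}\sum_{k=1}^{n}\mu^k*a\Bigr\| \;\le\; \|b\|_{\cA}\,\Bigl\|\frac{1}{n}\sum_{k=1}^{n}\mu^k*\lambda_{t_0}\Bigr\|_{C^*_r(\Gamma)}.
\]
Everything therefore reduces to the following sub-claim on $C^*_r(\Gamma)$: for every $t_0\in\Gamma\setminus\{e\}$, $\|c_n\|\to 0$, where $c_n:=\frac{1}{n}\sum_{k=1}^{n}\mu^k*\lambda_{t_0}$.

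For the sub-claim I would argue by contradiction. A telescoping estimate gives $\|\mu*c_n-c_n\|\le 2/n$, so the $c_n$ are asymptotically $\mu$-harmonic in $C^*_r(\Gamma)$. If $\|c_{n_j}\|\ge\epsilon$ along a subsequence, Hahn--Banach together with the Jordan decomposition of bounded linear functionals on a $C^*$-algebra produces states $\phi_j$ with $|\phi_j(c_{n_j})|$ bounded below. Setting $\psi_j:=\frac{1}{n_j}\sum_{k=1}^{n_j}\check\mu^{\,k}*\phi_j$, the duality $\phi(\mu*b)=(\check\mu*\phi)(b)$ (together with $\widecheck{\mu^k}=(\check\mu)^k$) yields $\psi_j(\lambda_{t_0})=\phi_j(c_{n_j})$, while the same telescoping gives $\|\check\mu*\psi_j-\psi_j\|\le 2/n_j\to 0$. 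Any weak-$*$ cluster point $\psi$ of $(\psi_j)$ is therefore a genuine $\check\mu$-stationary state; transferring the $C^*$-simplicity hypothesis to $\check\mu$ (or, equivalently, arranging $\mu$ to be symmetric from the outset), uniqueness forces $\psi=\tau_0$. But $\tau_0(\lambda_{t_0})=0$ contradicts $|\psi(\lambda_{t_0})|$ being bounded below.

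The hard part is this last step: upgrading the weak-$*$ uniqueness of the $\mu$-stationary state on $C^*_r(\Gamma)$ (furnished by \cite[Theorem 5.1]{HartKal}) to genuine norm convergence of the specific Cesàro averages $c_n$ evaluated on a single group element $\lambda_{t_0}$, and properly handling the switch from $\mu$ to $\check\mu$ that naturally appears when dualizing to states. Once this norm-averaging on $C^*_r(\Gamma)$ is in hand, the rest is straightforward bookkeeping around Lemma~\ref{ineq-av-1}.
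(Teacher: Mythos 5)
Your reduction is exactly the paper's: approximate $a$ by a finite sum $\sum_{i}\sigma(a_i)\tilde\lambda_{t_i}+\bE(a)$, use contractivity of $\bE$ and of $x\mapsto\mu^k*x$ to absorb the error uniformly in $n$, and apply Lemma \ref{ineq-av-1} term by term to reduce everything to the statement that $\bigl\|\frac1n\sum_{k=1}^n\mu^k*\lambda_{t_0}\bigr\|\to0$ for $t_0\neq e$. That sub-claim is precisely \cite[Proposition 4.7]{HartKal}, which the paper simply cites at this point; had you done the same, your argument would coincide with the paper's.

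The genuine gap is in your attempted proof of the sub-claim. With the paper's conventions one has $\tau(\mu*b)=(\check\mu*\tau)(b)$, so the weak-$*$ cluster points of your Ces\`aro averages $\psi_j$ are $\check\mu$-stationary states, whereas the hypothesis that $\mu$ is a $C^*$-simple measure gives uniqueness only of the $\mu$-stationary state on $C^*_{r}(\Gamma)$. You acknowledge this mismatch but do not close it, and neither of your proposed fixes is available as stated: it is not automatic (and you give no argument) that $\check\mu$ is $C^*$-simple whenever $\mu$ is; and you cannot ``arrange $\mu$ to be symmetric from the outset,'' because the theorem is asserted for an arbitrary given $C^*$-simple measure and is used downstream (Theorem \ref{thm:charac-stationary-crsed-prod} and Theorem \ref{main-general}, and via Lemma \ref{lem:gen-C*-simple-measure}, whose measure $\sum_k 2^{-k}\,\om*\mu_{n_k}$ is not symmetric) with the \emph{same} $\mu$ for which the stationary states on $\cA$ are assumed faithful. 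The remaining ingredients of your sub-claim argument --- the telescoping bound $\|\mu*c_n-c_n\|\le 2/n$, the Jordan decomposition producing states with $|\phi_j(c_{n_j})|$ bounded below, the weak-$*$ continuity of $\tau\mapsto\check\mu*\tau$ on states, and the extension of Lemma \ref{ineq-av-1} to $\ell^1$ weights --- are all sound; the unresolved switch from $\mu$ to $\check\mu$ is the one real hole, and it is exactly the content that the citation to \cite[Proposition 4.7]{HartKal} is meant to supply.
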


\begin{proof}
Let $a\in \Gamma\ltimes_r \cA$, and let $\ep>0$ be given. Then, there are $t_1, \dots, t_m \in \Gamma\backslash \{e\}$ and $a_1, \dots, a_m\in \cA$ such that for $b=\sum_{i=1}^m\sigma(a_{i})\tilde\lambda_{t_i}+E(a)$ we have $\|b-a\|_{\Gamma\ltimes_r \cA} < \frac{\ep}{2}$. Since $\mu$ is $C^*$-simple, it follows from \cite[Proposition 4.7]{HartKal} that $\left\|\frac{1}{n}\sum_{k=1}^n\mu^n*\lambda_{t_i}\right\|_{C^*_{r}(\Gamma)} \to 0$, as $n\to\infty$, for all $i = 1, 2, \ldots, m$. Thus, Lemma \ref{ineq-av-1} implies 
\[\begin{split}
&\left\|\frac{1}{n}\sum_{k=1}^n\mu^k*(b-\bE(a))\right\|_{\Gamma\ltimes_r \cA}
=
\left\|\frac{1}{n}\sum_{k=1}^n\sum_{i=1}^m\mu^k*(\sigma(a_{i})\tilde\lambda_{t_i})\right\|_{\Gamma\ltimes_r \cA}
\\&\quad\quad\quad\quad\quad\leq
\sum_{i=1}^m \left(\|a_i\|_\cA \left\|\frac{1}{n}\sum_{k=1}^n\mu^k*\tilde\lambda_{t_i}\right\|_{C^*_{r}(\Gamma)} \right)
\xrightarrow{n\to\infty} 0 .
\end{split}\]
Hence
\[
\limsup_n\left\|\, \frac1n\sum_{k=1}^n\mu^k*(a-\bE(a)) \, \right\|_{\Gamma\ltimes_r \cA} \leq \ep,
\]
and since $\ep$ was arbitrary, the theorem follows.
\end{proof}

\section{Stationary states on the reduced crossed product}
In this section we prove for an action $\Gamma\act \cA$ of a $C^*$-simple group, a one-to-one correspondence between stationary states on $\cA$ and stationary states on the reduced crossed product $\Gamma\ltimes_r \cA$.
This correspondence, together with the important feature of stationary states that for any action $\Gamma\act \cA$ and $\mu\in\pr(\Gamma)$ there is a $\mu$-stationary state $\tau$ on $\cA$ (\cite[Proposition 4.2]{HartKal}), are the main ingredients in proving our main result, Theorem \ref{main-general}.

\begin{theorem}\label{thm:charac-stationary-crsed-prod}
Let $\Gamma$ be a $C^*$-simple group, let $\mu \in \pr(\Gamma)$ be a $C^*$-simple measure, and let $\cA$ be a $\Gamma$-$C^*$-algebra. Then any $\mu$-stationary state $\tau$ on $\Gamma\ltimes_r \cA$ is of the form $\tau=\nu\circ\sigma^{-1} \circ\bE$ for some $\mu$-stationary state $\nu$ on $\cA$.
\end{theorem}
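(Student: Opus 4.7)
The plan is to establish the identity $\tau = \tau \circ \bE$ and then define $\nu := \tau \circ \sigma$. Once this identity is in hand, $\tau(a) = \tau(\bE(a)) = \nu(\sigma^{-1}(\bE(a)))$ for every $a \in \Gamma\ltimes_r\cA$, so $\tau = \nu \circ \sigma^{-1} \circ \bE$. Moreover, $\nu$ is plainly a state on $\cA$, and its $\mu$-stationarity follows from the $\Gamma$-equivariance of $\sigma$ (namely $\sigma(sa) = \tilde\lambda_s\sigma(a)\tilde\lambda_{s^{-1}}$):
\[
(\mu * \nu)(a) \,=\, \sum_s \mu(s)\,\tau\bigl(\sigma(s^{-1}a)\bigr) \,=\, \sum_s \mu(s)\,\tau\bigl(s^{-1}(\sigma(a))\bigr) \,=\, (\mu * \tau)(\sigma(a)) \,=\, \nu(a).
\]

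To prove $\tau = \tau \circ \bE$, set $c := a - \bE(a)$, so that $\bE(c) = 0$, and aim to show $\tau(c) = 0$. Since $\tau = \mu^{*k} * \tau$ for every $k \ge 1$, substituting $u = s^{-1}$ in the resulting sum yields
\[
\tau(c) \,=\, \sum_{s \in \Gamma} \mu^{*k}(s)\,\tau(s^{-1}(c)) \,=\, \sum_{u \in \Gamma} \check\mu^{*k}(u)\,\tau(u(c)) \,=\, \tau\bigl(\check\mu^{*k} * c\bigr),
\]
where $\check\mu(s) := \mu(s^{-1})$. Averaging over $k = 1, \dots, n$ and invoking Theorem \ref{Pow-av-red-cros-prod} applied to $\check\mu$ (noting $\bE(c) = 0$) gives $\|\tfrac{1}{n}\sum_{k=1}^n \check\mu^{*k} * c\|_{\Gamma\ltimes_r\cA} \to 0$, which forces $\tau(c) = 0$ as desired.

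The main technical hurdle is the appearance of $\check\mu$ rather than $\mu$ in the Powers' averaging step: Theorem \ref{Pow-av-red-cros-prod} requires a $C^*$-simple measure, so one must verify that $\check\mu$ is itself $C^*$-simple. By the Hartman--Kalantar criterion (\cite[Proposition 4.7]{HartKal}), this reduces to showing $\|\tfrac{1}{n}\sum_k \check\mu^{*k} * \lambda_t\|_{C^*_{r}(\Gamma)} \to 0$ for each $t \ne e$. A short computation gives $(\check\mu^{*k} * \lambda_t)^* = \lambda_{t^{-1}} \cdot \mu^{*k}$ (with $\mu^{*k}$ viewed as an element of $C^*_{r}(\Gamma)$), so this norm equals $\|\tfrac{1}{n}\sum_k \mu^{*k}\|$; since right multiplication by the unitary $\lambda_s$ is isometric, the latter equals $\|\tfrac{1}{n}\sum_k \mu^{*k} * \lambda_s\|$, which tends to $0$ for any $s \ne e$ by the $C^*$-simplicity of $\mu$. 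Once this is in place, the averaging argument runs exactly as outlined.
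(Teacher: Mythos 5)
Your overall architecture matches the paper's: use Theorem \ref{Pow-av-red-cros-prod} together with $\mu$-stationarity to get $\tau=\tau\circ\bE$, then restrict to $\sigma(\cA)$. The identification $\nu=\tau\circ\sigma$ and the check that $\nu$ is $\mu$-stationary are fine. You are also right that, with the conventions as literally stated in the paper ($(s\tau)(a)=\tau(s^{-1}a)$ and $\mu'*a=\sum_t\mu'(t)ta$), stationarity yields $\tau(c)=\tau(\check\mu^{*k}*c)$ rather than $\tau(\mu^{*k}*c)$; the paper's own proof silently writes $(\mu^{n}*\tau)(c)=\tau(\mu^{n}*c)$, i.e.\ it uses the convolution on elements that is dual to the convolution on states, so the wrinkle you isolate is real if one insists on the literal conventions.

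However, your verification that $\check\mu$ is a $C^*$-simple measure is incorrect, and this is a genuine gap. The convolution $\mu'*\lambda_t$ is taken with respect to the conjugation action, so $\mu'*\lambda_t=\sum_s\mu'(s)\lambda_s\lambda_t\lambda_s^{*}=\sum_s\mu'(s)\lambda_{sts^{-1}}$; it is \emph{not} the product of $\lambda_t$ with the element $\sum_s\mu'(s)\lambda_s$ of $C^*_{r}(\Gamma)$. Hence $(\check\mu^{*k}*\lambda_t)^*=\sum_u\mu^{*k}(u)\lambda_{u^{-1}t^{-1}u}$, which is not $\lambda_{t^{-1}}\cdot\bigl(\sum_u\mu^{*k}(u)\lambda_u\bigr)$, and the subsequent steps (reducing to $\|\frac1n\sum_k\mu^{*k}\|$ and then reinserting $\lambda_s$ by ``isometric right multiplication'') collapse for the same reason: $\mu^{*k}*\lambda_s$ is a sum of conjugates $\lambda_{usu^{-1}}$, not a right translate, and $\sum_u\mu^{*k}(u)\lambda_{u^{-1}tu}$ is not carried to $\sum_u\mu^{*k}(u)\lambda_{utu^{-1}}$ by taking adjoints, transposes, or multiplying by unitaries. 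The fact you want can still be obtained, but by duality rather than by such an algebraic identity: for any state $\nu$ on $C^*_{r}(\Gamma)$ one has $(\check\mu^{*k}*\nu)(a)=\sum_s\mu^{*k}(s^{-1})\nu(s^{-1}a)=\nu(\mu^{*k}*a)$, so the convergence $\|\frac1n\sum_{k}\mu^{*k}*\lambda_t\|\to0$ for $t\neq e$ (the input to Theorem \ref{Pow-av-red-cros-prod} extracted from \cite[Proposition 4.7]{HartKal}) forces every $\check\mu$-stationary state to vanish on all $\lambda_t$, $t\neq e$, hence to equal $\tau_0$; thus $\check\mu$ is $C^*$-simple and Theorem \ref{Pow-av-red-cros-prod} applies to it. Alternatively---and this is in effect what the paper does---adopt the convention that the convolution on elements satisfies $(\mu*\tau)(a)=\tau(\mu*a)$ by definition, in which case $\check\mu$ never enters. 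With either repair your argument is complete; as written, the $C^*$-simplicity of $\check\mu$ is not established.
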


\begin{proof}
Let $\mu \in \pr(\Gamma)$ be a $C^*$-simple measure, and let $\tau$ be a $\mu$-stationary state on $\Gamma\ltimes_r \cA$. Then, for any $a\in \Gamma\ltimes_r \cA$, Theorem \ref{Pow-av-red-cros-prod} implies
\[\begin{split}
\left|\, \tau(a-\bE(a)) \,\right| &= \left| (\mu^n*\tau) (a-\bE(a)) \,\right| 
=\left|\, \tau(\mu^n*(a-\bE(a))) \,\right| \\&\le \left\|\mu^n*(a-\bE(a))\right\|\xrightarrow{n \to \infty} 0 ,
\end{split}\]
which implies $\tau = \tau\circ \bE$. Thus, if we let $\nu=\tau|_{\sigma(\cA)}\circ\sigma$ be the state on $\cA$ obtained from restriction of $\tau$ to $\sigma(\cA)\subset \Gamma\ltimes_r \cA$, we see that $\nu$ is $\mu$-stationary and $\tau =\nu\circ\sigma^{-1} \circ\bE$.
\end{proof}

\begin{remark} 
A similar correspondence between invariant probabilities on $X$ and traces on the crossed product was proved by de la Harpe and Skandalis in \cite{DS86} in the case of minimal actions of Powers' groups. 
\end{remark}

\begin{remark}
The conclusion of the above Theorem \ref{thm:charac-stationary-crsed-prod} in the case of trivial action $\cA = \bC$ translates to unique stationarity of the canonical trace on the reduced $C^*$-algebra $C^*_{r}(\Gamma)$. Thus, it generalizes one direction of \cite[Theorem 5.1]{HartKal}, and in fact, combined with the latter, they give a similar characterization of $C^*$-simplicity, which we record in the following theorem. 

\begin{thm}
The following are equivalent for a countable group $\Gamma$.
\begin{enumerate}
\item
$\Gamma$ is $C^*$-simple;
\item
there is $\mu\in\pr(\Gamma)$ such that for any action $\Gamma\act \cA$, any $\mu$-stationary state $\tau$ on $\Gamma\ltimes_r \cA$ is of the form $\tau=\nu\circ\sigma^{-1} \circ\bE$ for some $\mu$-stationary state $\nu$ on $\cA$;
\item
there is an action $\Gamma\act \cA$ such that for some $\mu\in\pr(\Gamma)$, every $\mu$-stationary state $\tau$ on $\Gamma\ltimes_r \cA$ is of the form $\tau=\nu\circ\sigma^{-1} \circ\bE$ for some $\mu$-stationary state $\nu$ on $\cA$.
\end{enumerate}
\end{thm}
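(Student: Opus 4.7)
The plan is to prove the implications $(1) \Rightarrow (2) \Rightarrow (3) \Rightarrow (1)$, with the first two being essentially automatic and the bulk of the argument sitting in $(3) \Rightarrow (1)$. For $(1) \Rightarrow (2)$, I pick a $C^*$-simple measure $\mu$ and invoke Theorem \ref{thm:charac-stationary-crsed-prod} verbatim. For $(2) \Rightarrow (3)$, I instantiate $(2)$ with the trivial action on $\cA = \bC$: then $\Gamma \ltimes_r \bC$ is identified with $C^*_{r}(\Gamma)$, and the conclusion of $(3)$ holds with the same $\mu$.

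The substantive direction is $(3) \Rightarrow (1)$. Fix $\mu$ and the action $\Gamma \act \cA$ witnessing $(3)$. By \cite[Theorem 5.1]{HartKal}, to conclude $C^*$-simplicity it suffices to show that $\tau_0$ is the unique $\mu$-stationary state on $C^*_{r}(\Gamma)$, so let $\vp$ be any such state. My strategy is to lift $\vp$ to a $\mu$-stationary state $\tau$ on $\Gamma \ltimes_r \cA$ whose restriction to the canonical copy of $C^*_{r}(\Gamma)$ equals $\vp$, and then use the rigidity in $(3)$ to force $\vp = \tau_0$. To build $\tau$, extend $\vp$ by Hahn--Banach to a state $\hat\vp$ on $\Gamma \ltimes_r \cA$, form the Cesaro averages $\tau_n := \frac{1}{n}\sum_{k=1}^n \mu^k * \hat\vp$, and pass to a weak-$*$ limit point $\tau$. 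The standard estimate $\|\mu * \tau_n - \tau_n\| \le 2/n$ (the difference telescopes to $\frac{1}{n}(\mu^{n+1} * \hat\vp - \mu * \hat\vp)$) ensures that $\tau$ is $\mu$-stationary.

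The crucial compatibility is $\tau|_{C^*_{r}(\Gamma)} = \vp$. For this I will use that the embedding $\lambda_s \mapsto \tilde\lambda_s$ identifies $C^*_{r}(\Gamma)$ with a unital $C^*$-subalgebra of $\Gamma \ltimes_r \cA$ preserved by the inner $\Gamma$-action, since $\tilde\lambda_s \tilde\lambda_t \tilde\lambda_{s^{-1}} = \tilde\lambda_{sts^{-1}}$. Because $\vp$ is already $\mu$-stationary on $C^*_{r}(\Gamma)$, this invariance yields $(\mu^k * \hat\vp)|_{C^*_{r}(\Gamma)} = \mu^k * \vp = \vp$ for every $k$, so each $\tau_n$, and hence $\tau$, restricts to $\vp$. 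Applying $(3)$, $\tau = \nu \circ \sigma^{-1} \circ \bE$ for some $\mu$-stationary $\nu$ on $\cA$; since $\bE(\tilde\lambda_s)$ equals $\sigma(1)$ when $s = e$ and $0$ otherwise, one gets $\tau(\tilde\lambda_s) = 0$ for $s \neq e$ and $\tau(\tilde\lambda_e) = 1$, so $\tau|_{C^*_{r}(\Gamma)} = \tau_0$, and therefore $\vp = \tau_0$. The main obstacle is conceptual rather than technical: recognizing that one can pull the uniqueness question from $C^*_{r}(\Gamma)$ into the crossed product via a Hahn--Banach-plus-Cesaro construction, and checking that the inner invariance of $C^*_{r}(\Gamma)$ keeps the restriction intact throughout the averaging.
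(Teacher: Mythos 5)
Your proposal is correct and follows essentially the same route as the paper: $(1)\Rightarrow(2)$ via Theorem \ref{thm:charac-stationary-crsed-prod}, $(2)\Rightarrow(3)$ by instantiation, and $(3)\Rightarrow(1)$ by extending an arbitrary $\mu$-stationary state on $C^*_{r}(\Gamma)$ to a $\mu$-stationary state on $\Gamma\ltimes_r\cA$ and reading off $\tau_0$ from the form $\nu\circ\sigma^{-1}\circ\bE$. The only difference is that you prove the extension step by hand (Hahn--Banach plus Cesaro averaging, with the inner-invariance of $C^*_{r}(\Gamma)$ keeping the restriction fixed), whereas the paper simply cites \cite[Proposition 4.2]{HartKal} for it; your argument is a correct proof of that cited fact.
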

\begin{proof}
By \cite[Theorem 5.1]{HartKal} every $C^*$-simple group admits a $C^*$-simple measure, thus (1) $\implies$ (2) follows from Theorem \ref{thm:charac-stationary-crsed-prod}. The implication (2) $\implies$ (3) is trivial. Now suppose (3) holds. Then let $\eta$ be a $\mu$-stationary state on $C^*_{r}(\Gamma)$. By \cite[Proposition 4.2]{HartKal}, $\eta$ extends to a $\mu$-stationary state $\tau$ on $\Gamma\ltimes_r \cA$. Let $\nu$ be the state on $\cA$ such that $\tau=\nu\circ\sigma^{-1} \circ\bE$. Then for $s\in\Gamma/\{e\}$ we have $\eta(\lambda_s) = \nu \circ\sigma^{-1}(\bE(\lambda_s)) = 0$, hence $\eta= \tau_0$. This shows that $\tau_0$ is the unique $\mu$-stationary state on $C^*_{r}(\Gamma)$, and thus $\Gamma$ is $C^*$-simple by \cite[Theorem 5.1]{HartKal}.
\end{proof}
\end{remark}

\section{Proofs of the main results}
In this section we prove Theorems \ref{main-general} and \ref{main-commutative}.\\

\noindent
{\it Proof of Theorem \ref{main-general}.}\ 
Let $\Gamma$ be a countable discrete $C^*$-simple group, and let $\cA$ be a $\Gamma$-$C^*$-algebra. Let $\mu\in\pr(\Gamma)$ be such that all $\mu$-stationary states on $\cA$ are faithful. 
Let $\cB$ be a unital $\Gamma$-invariant $C^*$-subalgebra of $\Gamma\ltimes_r\cA$, and let $I$ be a proper closed two-sided $\Gamma$-invariant ideal of $\cB$. Then the action $\Gamma \curvearrowright \mathcal{B}$ induces an action $\Gamma \curvearrowright \mathcal{B}/I$. By \cite[Proposition 4.2]{HartKal}, there exists a $\mu$-stationary state $\eta$ on $\mathcal{B}/I$. Composing $\eta$ with the canonical quotient map $\mathcal{B}\to \mathcal{B}/I$ we obtain a $\mu$-stationary state $\tilde\eta$ on $\mathcal{B}$ that vanishes on $I$. Now by the same \cite[Proposition 4.2]{HartKal}, this $\tilde\eta$ can be extended to a $\mu$-stationary state $\tau$ on $\Gamma\ltimes_r\cA$. By Theorem \ref{thm:charac-stationary-crsed-prod}, there is a $\mu$-stationary state $\nu$ on $\cA$ such that $\tau=\nu\circ\sigma^{-1} \circ\bE$. By the assumptions, $\nu$ is faithful, and since $\bE$ is also faithful, it follows $\tau$ is faithful. But $\tau$ vanishes on $I$, hence $I$ is trivial.\qed\\

In order to prove Theorem \ref{main-commutative} we need to work with a generating $C^*$-simple measure, existence of which for a $C^*$-simple group was not established formally in \cite{HartKal}. But we verify below that a simple tweak in the proof of \cite[Theorem 5.1]{HartKal} will do the job.

\begin{lem}{(cf. \cite[Theorem 5.1]{HartKal})}\label{lem:gen-C*-simple-measure}
Every countable $C^*$-simple group $\Gamma$ admits a generating $C^*$-simple measure.
\end{lem}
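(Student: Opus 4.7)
The plan is to revisit the proof of \cite[Theorem 5.1]{HartKal} and adjust its construction of a $C^*$-simple measure so that the resulting measure has generating support. Recall that the central input is the Powers averaging property, proved for all $C^*$-simple groups by Haagerup and Kennedy: for every $s\in\Gamma\setminus\{e\}$ and every $\ep>0$ there exist $g_1,\dots,g_n\in\Gamma$ with $\bigl\|\tfrac1n\sum_{i=1}^n \lambda_{g_i s g_i^{-1}}\bigr\|_{C^*_r(\Gamma)}<\ep$. The proof of \cite[Theorem 5.1]{HartKal} packages such averaging data, one choice for each $s$ in an enumeration of $\Gamma\setminus\{e\}$, into a single probability measure $\mu\in\pr(\Gamma)$ whose Cesaro convolution powers satisfy $\bigl\|\tfrac1N\sum_{k=1}^N \mu^{*k}*\lambda_s\bigr\|\to 0$ for every $s\ne e$; this in turn forces $\tau_0$ to be the unique $\mu$-stationary state on $C^*_r(\Gamma)$, i.e. $\mu$ is $C^*$-simple.

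To force the support of $\mu$ to generate $\Gamma$, I would fix in advance an enumeration $\{t_n\}_{n\geq 1}$ of $\Gamma$ (possible since $\Gamma$ is countable) and, at the $n$-th stage of the construction, enlarge the averaging measure used there so that its support contains $\{t_1,\dots,t_n\}$. Concretely, if $\eta_n$ denotes the stage-$n$ averaging measure chosen for $s_n$, replace it with $\tilde\eta_n := (1-\alpha_n)\,\eta_n + \alpha_n\cdot\tfrac{1}{n}\sum_{j=1}^n \delta_{t_j}$ for a small weight $\alpha_n>0$, while simultaneously tightening the stage-$n$ Powers averaging accuracy (by choosing the original $\ep_n$ correspondingly smaller) to absorb the $\alpha_n$-perturbation. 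The measure $\tilde\mu$ assembled from the $\tilde\eta_n$'s then has $\{t_n:n\geq 1\}=\Gamma\subseteq\supp(\tilde\mu)$, so $\tilde\mu$ is generating.

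The main (and essentially only) obstacle is the quantitative bookkeeping: verifying that the perturbation does not spoil the Cesaro decay used to establish uniqueness of the $\mu$-stationary state. Since the Powers averaging property can be invoked at arbitrarily small accuracy, one has the freedom to let $\alpha_n$ decay much faster than the stage-$n$ averaging tolerance in the original argument, so that the extra $\alpha_n\cdot\tfrac1n\sum_j\delta_{t_j}$ contributions produce only a summably small perturbation of the key Cesaro estimate of \cite[Theorem 5.1]{HartKal}. With this choice, the argument of that proof goes through verbatim for $\tilde\mu$ in place of $\mu$, yielding $C^*$-simplicity. No new conceptual ingredient is required beyond the HartKal construction itself, which is the sense in which the modification is a ``simple tweak.''
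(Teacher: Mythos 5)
Your proposal is correct and follows essentially the same strategy as the paper: perturb the approximating sequence $(\mu_n)$ from the proof of \cite[Theorem 5.1]{HartKal} so that the supports exhaust $\Gamma$ while the key convergence $\left\|\mu_n*a-\tau_0(a)1\right\|\to 0$ is preserved, and then invoke the same subsequence--summation step to produce the $C^*$-simple measure. The paper's tweak is marginally cleaner --- it replaces each $\mu_n$ by $\om*\mu_n$ for one fixed fully supported $\om\in\pr(\Gamma)$, so every term already has full support and the needed estimate is an immediate contraction with no $\ep_n$/$\alpha_n$ bookkeeping --- but your convex-combination perturbation with $\alpha_n\to 0$ achieves the same thing.
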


\begin{proof}
It was shown in the proof of \cite[Theorem 5.1]{HartKal} that if $\Gamma$ is a $C^*$-simple group then there is a sequence $(\mu_n)$ of probabilities on $\Gamma$ such that $\left\|\mu_n * a -\tau_0(a)1_{C^*_{r}(\Gamma)}\right\| \to 0$, as $n\to\infty$, for all $a\in C^*_{r}(\Gamma)$, and that any such sequence $(\mu_n)$ has a subsequence $(\mu_{n_k})$ such that $\mu := \sum_{k=1}^\infty \frac{1}{2^k} \mu_{n_k}$ is a $C^*$-simple measure.

Now, consider a sequence $(\mu_n)$ as above, and for a fixed $\om\in \pr(\Gamma)$ with full support, let $\tilde\mu_n := \om*\mu_n$ for each $n\in \bN$. Then every $\tilde\mu_n$ has full support, and
\[\begin{split}
\left\|\tilde\mu_n * a -\tau_0(a)1_{C^*_{r}(\Gamma)}\right\| 
&= \left\|\om*\mu_n * a -\tau_0(a)1_{C^*_{r}(\Gamma)}\right\|
\\&= \left\|\om*[\mu_n * a -\tau_0(a)1_{C^*_{r}(\Gamma)}]\right\|
\\&\le \left\|\mu_n * a -\tau_0(a)1_{C^*_{r}(\Gamma)}\right\|
\xrightarrow{n\to\infty} 0
\end{split}\]
for all $a\in C^*_{r}(\Gamma)$, which implies, as commented above, that for an appropriately chosen subsequence, the measure $\tilde\mu := \sum_{k=1}^\infty \frac{1}{2^k} \tilde\mu_{n_k}$ is $C^*$-simple. Since the measures $\mu_{n_k}$ have full support, so does the $C^*$-simple measure $\tilde\mu$.
\end{proof}

\noindent
{\it Proof of Theorem \ref{main-commutative}.}\ 
Let $\Gamma$ be a countable discrete $C^*$-simple group, and let $\Gamma\act X$ be a minimal action on the compact space $X$. By Lemma \ref{lem:gen-C*-simple-measure}, there is a generating $C^*$-simple measure $\mu$ on $\Gamma$. Let $\nu\in\pr(X)$ be $\mu$-stationary. It is not hard to see that $\supp(\nu)$ is invariant under the action of elements in $\supp(\mu)$, and since $\mu$ is generating, the $\supp(\nu)$ is $\Gamma$-invariant. Therefore, by minimality of the action $\Gamma\act X$, we conclude that $\supp(\nu)= X$. This implies every $\mu$-stationary state on $C(X)$ is faithful, hence the result follows from Theorem \ref{main-general}.\qed

\end{document}